\gdef\n@te#1#2{\leavevmode\vadjust{%
 {\setbox\z@\hbox to\z@{\strut#1}%
  \setbox\z@\hbox{\raise\dp\strutbox\box\z@}\ht\z@=\z@\dp\z@=\z@%
  #2\box\z@}}}
\gdef\leftnote#1{\n@te{\hss#1\quad}{}}
\gdef\rightnote#1{\n@te{\quad\kern-\leftskip#1\hss}{\moveright\hsize}}
\gdef\?{\FN@\qumark}
\gdef\qumark{\ifx\next"\DN@"##1"{\leftnote{\rm##1}}\else
 \DN@{\leftnote{\rm??}}\fi{\rm??}\next@}}
\DeclareFontFamily{OT1}{wncyr}{\hyphenchar\font45
}
\DeclareFontShape{OT1}{wncyr}{m}{n}{%
   <5> <6> <7> <8> <9> gen * wncyr
   <10> <10.95> <12> <14.4> <17.28> <20.74>  <24.88>wncyr10}{}
\DeclareFontShape{OT1}{wncyr}{m}{it}{%
   <5> <6> <7> <8> <9> gen * wncyi
   <10> <10.95> <12> <14.4> <17.28> <20.74> <24.88> wncyi10}{}
\DeclareFontShape{OT1}{wncyr}{m}{sc}{%
   <5> <6> <7> <8> <9> <10> <10.95> <12> <14.4>
   <17.28> <20.74> <24.88>wncysc10}{}
\DeclareFontShape{OT1}{wncyr}{b}{n}{%
   <5> <6> <7> <8> <9> gen * wncyb
   <10> <10.95> <12> <14.4> <17.28> <20.74> <24.88>wncyb10}{}
\theoremstyle{plain}
\newtheorem*{thmnonumber}{Theorem}
\theoremstyle{definition}
\newtheorem{nothing*}[theorem]{}
\newtheorem{subnothing*}[sub]{}
\theoremstyle{remark}
\begin{document}




\title[On conjugacy of stabilizers of reductive group actions]
{On conjugacy of stabilizers of\\ reductive group actions}
\author[Vladimir  L. Popov]{Vladimir  L. Popov}
\address{Steklov Mathematical Institute,
Russian Academy of Sciences, Gubkina 8,
Mos\-cow\\ 119991, Russia}
\email{popovvl@mi-ras.ru}





\maketitle

\begin{abstract}
It is shown that the main result of
N.\;R.\;Wallach, {\it Principal orbit type theorems for reductive
algebraic group actions and the Kempf--Ness Theorem}, {\tt arXiv:1811.07195v1} (17 Nov 2018) is a special case of
a more general statement, which can be
deduced, using a short argument, from
the classical
Ri\-char\-dson and Luna theorems.
\end{abstract}

\vskip 3mm

{\bf 1.} In the recent preprint \cite{W}, the following main result
is obtained using the Kemf--Ness theorem to reduce it
to the principal orbit type theorem for compact
Lie groups:

``Let $G$ be a reductive, affine algebraic
group and let $(\rho, V )$ be a regular representation of $G$. Let $X$ be an irreducible ${\mathbb C}^\times×G$ invariant Zariski closed subset such that $G$ has a closed
orbit that has maximal dimension among all orbits (this is equivalent
to: generic orbits are closed). Then there exists an open subset, $W$, of
$X$ in the metric topology which is dense with complement of measure
$0$ such that if $x, y \in W$ then $({\mathbb C}^\times×G)_x$ is conjugate to
$({\mathbb C}^\times×G)_y$.\;Fur\-ther\-more, if $Gx$ is a closed orbit of
 maximal dimension and if $x$ is a smooth
point of $X$ then there exists $y \in W$ such that $({\mathbb C}^\times×G)_x$
contains a conjugate of $({\mathbb C}^\times×G)_y$.''

Below is shown that the more general statements can be deduced, using a short argument,  from
the classical Ri\-char\-dson and Luna theorems.

\vskip 2mm

{\bf 2.} We fix an algebraically closed ground field $k$ of characteristic $0$ and use freely the standard notation of \cite{B}, \cite{PV}.

Let $G$ be a reductive algebraic group such that  $G=CR$, where $C$ is a diagonalizable algebraic subgroup of  the center of $G$ and $R$ is a reductive algebra\-ic subgroup of $G$.\;We denote by ${\mathcal X}(C)$ the character group of $C$ and, given an algebraic $C$-module $M$ and a character $\alpha\in {\mathcal X}(C)$, by $M_\alpha$ the weight space of $M$ of the weight $\alpha$.\;Since $C$ is diagonalizable, $M$ is the direct sum of the $M_\alpha$'s; see \cite[III.8.17]{B}.

Let $X$ be irreducible affine algebraic variety endowed with a regular (mor\-phic) action of $G$.

\begin{thmnonumber}
In the above notation, assume that there  is
a closed $R$-orbit
of  maximal dimension among all $R$-orbits in $X$.\;Then the following hold:
\begin{enumerate}[\hskip 4.2mm\rm(a)]
\item
There exists a dense open {\rm(}in the Zariski topology\,{\rm)} subset $U$ of $X$ such that if $x, y\in U$, then $G_x$ is conjugate to $G_y$.
\item 
If the $R$-orbit $R(z)$ of a point
$z\in X$ is closed, then there exists a point $y\in U$ such that $G_z$ contains a conjugate of $G_y$.
\end{enumerate}
\end{thmnonumber}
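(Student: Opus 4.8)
The plan is to transfer the hypothesis to the normal reductive subgroup $R\trianglelefteq G$ --- where it is precisely the hypothesis of Richardson's theorem --- and then to reconstruct the $G$-stabilizers from the $R$-stabilizers by means of the residual action of the diagonalizable group $G/R\cong C/(C\cap R)$. (That $R\trianglelefteq G$ is clear: for $g=cr$ with $c$ central one has $gRg^{-1}=cRc^{-1}=R$.) First I would record that the hypothesis is equivalent to: \emph{generic $R$-orbits in $X$ are closed}. For the nontrivial implication, apply Luna's \'etale slice theorem at a point $x_0$ with $R(x_0)$ closed of maximal dimension, obtaining an $R_{x_0}$-stable slice $S\ni x_0$ and an \'etale $R$-morphism $R\times_{R_{x_0}}S\to X$ onto a dense $R$-saturated open subset $\Omega$; maximality of $\dim R(x_0)$ forces $R_{x_0}^{\circ}$ to act trivially on $S$, so $R_{x_0}$ acts on $S$ through a finite group, whence every $R$-orbit in $\Omega$ --- and therefore the generic $R$-orbit in $X$ --- is closed.

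Richardson's principal orbit type theorem (which, for affine and possibly singular $X$, is itself a consequence of Luna's slice theorem) then yields a dense open $R$-stable $U_0=\pi_R^{-1}(Y_0)$, with $Y_0$ open in $Y:=X/\!\!/ R$, such that for $x\in U_0$: the orbit $R(x)$ is closed and fills the fibre $\pi_R^{-1}(\pi_R(x))$; the stabilizers $R_x$ are all conjugate in $R$ to a fixed subgroup $H$, reductive by Matsushima's criterion; and every stabilizer of a point with closed $R$-orbit contains an $R$-conjugate of $H$ (again via a slice at such a point). Since $C$ is central, $R_{cx}=cR_xc^{-1}=R_x$, so $U_0$ is $C$-stable, hence $G$-stable, and $C$ acts on $Y$ compatibly with $\pi_R$.

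Next I would reconstruct $G_x$ for $x\in U_0$. Here $R_x=G_x\cap R\trianglelefteq G_x$, the quotient $G_x/R_x$ embeds into $G/R$ and is therefore diagonalizable, and $G_x$ is reductive. Because $R(x)$ fills its $\pi_R$-fibre, one has $cx\in R(x)\iff c\in C_{\pi_R(x)}$, and a direct computation produces an isomorphism $C_{\pi_R(x)}/(C\cap R)\cong G_x/R_x$ together with a coset decomposition $G_x=\bigsqcup_{\bar c}c\,\rho(c)\,R_x$, where $\rho(c)\in R$ satisfies $\rho(c)x=c^{-1}x$. Finally, a diagonalizable group has a \emph{fixed} generic stabilizer on an irreducible affine variety --- the intersection of the kernels of the weights occurring in its coordinate ring --- so, applying this to the $C$-action on $Y$, there is a dense open $C$-stable $Y_1\subseteq Y_0$ on which $C_y$ equals a fixed subgroup $D\le C$. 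Put $U:=\pi_R^{-1}(Y_1)$, a dense open $G$-stable subset; on $U$ we have $R_x\sim_R H$ and $G_x/R_x\cong D/(C\cap R)=:\bar D$, a fixed diagonalizable group.

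It remains to upgrade ``same $\bar D$ and conjugate $R_x$'' to ``conjugate $G_x$'', and this is where I expect the real work to lie. For $x\in U$, after conjugating by an element of $R$ to arrange $R_x=H$, the group $G_x$ is the preimage in $N_G(H)$ of the image of an injective homomorphism $\varphi_x\colon\bar D\to Q:=N_G(H)/H$, and $\varphi_x$ depends algebraically on $x$. Since $\bar D$ is diagonalizable, hence linearly reductive, $H^2(\bar D,\mathrm{Lie}\,Q)=0$, so $\mathrm{Hom}(\bar D,Q)$ is smooth and each of its $Q$-conjugacy orbits is open, i.e.\ a union of connected components; hence $G_x$ lies in only finitely many $G$-conjugacy classes as $x$ ranges over the irreducible $U$, and one of these classes is attained on a dense open $G$-stable subset --- which is the $U$ of part (a). For part (b), if $R(z)$ is closed we may assume $H\subseteq R_z$; the computation above applies verbatim to $z$ (two closed $R$-orbits in one fibre coincide), and the weight description gives $\bar D\subseteq C_{\pi_R(z)}/(C\cap R)$, so that $H$ together with the cosets $c\,\rho(c)$ ($\bar c\in\bar D$) generate inside $G_z$ a subgroup that is $G$-conjugate to $G_y$ for a suitable $y\in U$. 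Matching the diagonalizable ``extension datum'' $\varphi_y$ of a generic point with (a restriction of) that of an arbitrary point with closed $R$-orbit is the delicate step; everything else is bookkeeping around the theorems of Richardson and Luna.
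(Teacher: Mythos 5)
There are two genuine gaps, and the first is the serious one. At the very start you invoke the principal orbit type theorem for the $R$-action on $X$ itself, asserting parenthetically that it holds ``for affine and possibly singular $X$''. Richardson's theorem \cite[Prop.\,5.3]{R} and Luna's \cite[Cor.\,8]{L} are proved for \emph{smooth} affine varieties, and the slice theorem you lean on (both for the equivalence with ``generic $R$-orbits are closed'' and for the structure of $U_0$) also requires smoothness at the base point. The principal orbit type statement for a singular irreducible affine $X$ admitting a closed orbit of maximal dimension is precisely the case $C=\{1\}$ of the theorem you are trying to prove, so you are assuming the crux of the matter. The paper's entire contribution is the device that repairs this: by \cite[Thm.\,4]{P} there is a closed $R$-orbit $\mathcal O$ of maximal dimension disjoint from the singular locus $S$, hence an $f\in k[X]^R$ with $f|_S=0$ and $f|_{\mathcal O}=1$; a suitable $C$-weight component $f_\alpha$ of $f$ is a nonzero $G$-\emph{semi-invariant} vanishing on $S$, so $X_{f_\alpha}$ is a smooth affine $G$-stable dense open subset, and Richardson is then applied directly to the $G$-action on $X_{f_\alpha}$ --- no passage through $R$-stabilizers at all. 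Nothing like this appears in your proposal. (For part (b) the same trick, applied to $B=\overline{G(z)}\setminus G(z)$ and $R(z)$, yields an affine $G$-stable open subset in which $G(z)$ is closed, and the Luna slice theorem for the $G$-action finishes.)

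The second gap is the one you flag yourself: having reduced $G_x$ to the pair consisting of $R_x$ and the homomorphism $\varphi_x\colon\bar D\to N_G(H)/H$, you must still show that the conjugacy class of this extension datum is generically constant, and in (b) that the datum at an arbitrary point with closed $R$-orbit contains a conjugate of the generic one. Your rigidity argument for $\mathrm{Hom}(\bar D,Q)$ is plausible in outline, but making $x\mapsto\varphi_x$ algebraic requires local sections of $x\mapsto(\text{an element of }R\text{ conjugating }R_x\text{ to }H)$ and of $c\mapsto\rho(c)$; these come from the slice theorem and therefore again presuppose the smoothness you do not have. Since the paper's route applies Richardson and Luna directly to the $G$-action, the whole reconstruction of $G_x$ from $R_x$ is unnecessary: the only place the decomposition $G=CR$ enters the paper is in upgrading the $R$-invariant $f$ to the $G$-semi-invariant $f_\alpha$.
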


\begin{proof}
(a) Let $S$ be the singular locus of $X$.\;We may (and shall) assume that $S\neq \varnothing$, because otherwise
the claim to be proved immediately follows from the Richardson theorem \cite[Prop.\,5.3]{R}
(see also \cite[Cor.\,8]{L}).\;As $S$ is a closed $G$-stable subset of $X$, we have $k[S]=
\bigoplus_{\alpha\in{\mathcal X}(C)}(k[S])_\alpha$.
The assumption on
$R$-orbit implies the existence
of a dense open subset of $X$ whose points have closed $R$-orbits of maximal dimension  \cite[Thm.\;4]{P}.\;Hence there is a closed $R$-orbit $\mathcal O$ such that $S\cap \mathcal O=\varnothing$.\;This implies the existence of a function $f\in k[X]^R$ such that $f|_S=0$, $f|_{\mathcal O}=1$ (see, e.g., \cite[Lem.\;8.19(ii)]{B} or \cite[Thm.\,4.7]{PV}).
Since $C$ centralizes $R$, the algebra $k[X]^R$ is $C$-stable, so we have the weight decomposition
$k[X]^R=\bigoplus_{\alpha\in{\mathcal X}(C)}(k[X]^R)_\alpha$.\;Let $\pi_{\alpha}\colon k[X]^R\to (k[X]^R)_\alpha$ be the natural projection. Since $f|_{\mathcal O}\neq 0$, there is
$\alpha\in {\mathcal X}(C)$ such that for $f_\alpha:=\pi_{\alpha}(f)$, we have $f_{\alpha}|_{\mathcal O}\neq 0$. Since $G=CR$, the function $f_\alpha$ is a semi-invariant of $G$.\;As $S$ is $G$-stable, the homomorphism
$\varrho\colon k[X]\to k[S]$, $h\mapsto h|_S$, is $G$-equivariant, hence $\varrho(k[X]_\alpha)\subseteq k[S]_\alpha$.\;In view of $\varrho(f)=0$, this implies $\varrho(f_\alpha)=0$.\;Thus $f_\alpha$ is a nonzero semi-invariant of $G$ vanishing on $S$.\;Hence $X_{f_\alpha}:=\{x\in X\mid f_\alpha(x)\neq 0\}$ is
a $G$-stable dense open subset of $X$, which is a smooth affine variety.\;Now, by the Richardson theorem \cite[Prop.\,5.3]{R} (see also \cite[Cor.\,8]{L}),  there is a dense open subset $U$ of $X_{f_\alpha}$ such that  if $x, y\in U$, then $G_x$ is conjugate to $G_y$. This proves (a).

(b) Let $\overline{G(z)}$ be the closure of the $G$-orbit of $z$ in $X$.\;Then $B:=\overline{G(z)}\setminus G(z)$ is a closed $G$-stable subset of $X$.\;If $B=\varnothing$, then the existence of $y$ immediately follows from the Luna slice theorem, see \cite[Rem.\,$4^\circ$ on p.\,98]{L} (cf.\;\cite[Thm.\;6.3]{PV}).\;Now consider the case $B\neq \varnothing$.\;Since  $B\cap R(z)=\varnothing$, the same argument as in the above proof of (a)
shows the existence of a $G$-semi-invariant $f\!\in\! k[X]^R$ such that $f|_B=0$, $f|_{R(z)}=1$.\;The latter equality implies that $f$ vanishes nowhere on $G(z)$.\;Therefore,
$X_f$ is a $G$-stable dense open subset of $X$ containing $G(z)$, and $G(z)$ is closed in $X_f$.
Now, since $X_f$ is affine, the existence of $y$ follows from the Luna slice theorem, as above.\;This proves (b).
\end{proof}
\vskip -1mm
{\bf 3.}\;{\it Remark.}\;In \cite[Sect.\;6]{W} is given an example of a linear action of a semi\-simp\-le group, which
shows that the existence of a point with trivial stabilizer does not imply triviality of
stabilizers of points in general position.\;It should be noted that
this phenomenon is not new, similar examples
has long been known (perhaps the earliest one belongs to Richardson, see \cite[Rem.\,$4^\circ$ on p.\,98]{L}).


\end{document}